\newtheorem{thm}{Theorem}[section] 
\newtheorem{cor}[thm]{Corollary}
\newtheorem{lem}[thm]{Lemma}
\newtheorem{prop}[thm]{Proposition}
\theoremstyle{definition}
\newtheorem{rem}[thm]{Remark}
\newcommand\operA[2]{{\if!#2!\operatorname{#1}\else{\operatorname{#1}_{#2}^{\phantom{I}}}\fi}} 
\newcommand\Cref[1]{{Corollary~\ref{#1}}}%
\def\norm{{\operatorname{N}}}
\newcommand{\Trace}[1][]{\if!#1!\operatorname{Tr}\else{\operatorname{Tr}_{#1}^{\phantom{I}}}\fi} 
\long\def\forget#1\forgotten{{}} %
\def\({\left(}
\def\){\right)}
\newif\iffurther
\newif\ifXY 
\journal{Journal of Algebra and its Applications}
\begin{document}

\begin{frontmatter}

\title{Symbol Length of $p$-Algebras of Prime Exponent}

\author{Adam Chapman}
\ead{adam1chapman@yahoo.com}

\address{Department of Computer Science, Tel-Hai College, Upper Galilee, 12208 Israel}

\begin{abstract}
We prove that if the maximal dimension of an anisotropic homogeneous polynomial form of prime degree $p$ over a field $F$ with $\operatorname{char}(F)=p$ is a finite integer $d$ greater than 1 then the symbol length of $p$-algebras of exponent $p$ over $F$ is bounded from above by $\left \lceil \frac{d-1}{p} \right \rceil-1$, and show that every two tensor products of symbol algebras of lengths $k$ and $\ell$ with $(k+\ell) p \geq d-1$ can be modified so that they share a common slot. For $p=2$, we obtain an upper bound of $\frac{u(F)}{2}-1$ for the symbol length, which is sharp when $I_q^3 F=0$.
\end{abstract}

\begin{keyword}
Central Simple Algebras, $p$-Algebras, Symbol Length, Linkage, Homogeneous Polynomial Forms, Quadratic Forms, $u$-invariant
\MSC[2010] 16K20 (primary); 11E76, 11E81 (secondary)
\end{keyword}

\end{frontmatter}

\section{Introduction}

A $p$-algebra is a central simple algebra of degree $p^m$ over a field $F$ with $\operatorname{char}(F)=p$ for some prime number $p$ and positive integer $m$.

It was proven in \cite[Chapter 7, Theorem 30]{Albert} that every $p$-algebra of exponent $p$ is Brauer equivalent to a tensor product of algebras of the form
$$[\alpha,\beta)_{p,F}=F \langle x,y : x^p-x=\alpha, y^p=\beta, y x-x y=y \rangle$$
for some $\alpha \in F$ and $\beta \in F^\times$.
We call such algebras ``symbol algebras".
An equivalent statement was proven in \citep{MS} when $\operatorname{char}(F) \neq p$ and $F$ contains primitive $p$th roots of unity.

This means the group $\prescript{}{p}{Br}(F)$ is generated by symbol algebras.
The minimal number of symbol algebras required in order to express a given $A$ in $\prescript{}{p}{Br}(F)$ is called the symbol length of $A$.
The symbol length of $\prescript{}{p}{Br}(F)$ is the supremum of the symbol lengths of all $A \in \prescript{}{p}{Br}(F)$.
This number gives an indication of how complicated this group is, and due to the significance of this group, the symbol length has received special attention in papers such as \cite{Florence} and \cite{Matzri}.

In \cite[Theorem 3.4]{Matzri} it was proven that when $\operatorname{char}(F) \neq p$, $F$ contains primitive $p$th roots of unity and the maximal dimension of an anisotropic homogeneous polynomial form of degree $p$ over $F$ is a finite integer $d$, the symbol length of $A$ is at most $\left \lceil \frac{d+1}{p} \right \rceil-1$. The case of $p$-algebras was also considered in that paper, but only over $C_m$-fields, which involve assumptions on the greatest dimensions of anisotropic homogeneous polynomial forms of any degree, not only $p$.

We prove that when $\operatorname{char}(F)=p$ and the maximal dimension of an anisotropic form of degree $p$ over $F$ is a finite integer $d$ greater than 1, the symbol length of $A$ is at most $\left \lceil \frac{d-1}{p} \right \rceil-1$.
We obtain this bound by first proving that every two tensor products of symbol algebras $\bigotimes_{i=1}^k C_i$ and $\bigotimes_{i=1}^\ell D_i$ with $(k+\ell)p \geq d-1$ can be modified so that they share a common slot.

In the last section, we focus on the case of $p=2$. We explain how the same argument holds in this case if we replace $d$ with the $u$-invariant, i.e. the maximal dimension of an anisotropic nonsingular quadratic form over $F$, and why the upper bound $\frac{u(F)}{2}-1$ for the symbol length is sharp when $I_q^3 F=0$.

\section{Preliminaries}

We denote by $M_p(F)$ the $p \times p$ matrix algebra over $F$.
For two given central simple algebras $A$ and $B$ over $F$, we write $A=B$ if the algebras are isomorphic.
We recall most of the facts we need on symbol algebras.
For general background on these algebras see \cite[Chapter 7]{Albert}.

We start with some basic symbol changes.
In the following remark and lemmas, let $p$ be a prime integer and $F$ be a field with $\operatorname{char}(F)=p$.
We say that a homogeneous polynomial form $\varphi$ over $F$ is anisotropic if it has no nontrivial zeros.

\begin{rem}\label{rem1}
\begin{itemize}
\item[(a)] $M_p(F)=[\alpha,1)_{p,F}=[0,\beta)_{p,F}$ for every $\alpha \in F$ and $\beta \in F^\times$.
\item[(b)] $[\alpha,\beta)_{p,F} \otimes [\alpha,\gamma)_{p,F}=M_p(F) \otimes [\alpha,\beta \gamma)_{p,F}$.
\item[(c)] $[\alpha,\gamma)_{p,F} \otimes [\beta,\gamma)_{p,F}=M_p(F) \otimes [\alpha+\beta,\gamma)_{p,F}$.
\item[(d)] If $[\alpha,\beta)_{p,F}$ contains zero divisors then it is $M_p(F)$.
\item[(e)] Given $K=F[x : x^p-x=\alpha]$, we denote by $\operatorname{N}_{K/F}$ the norm form $K \rightarrow F$. This form is homogeneous of degree $p$ over $F$ of dimension $[K:F]=p$.
\end{itemize}
\end{rem}

\begin{lem}\label{addtofirst}
Consider $[\alpha,\beta)_{p,F}=F \langle x,y : x^p-x=\alpha, y^p=\beta, yx-xy=y \rangle$ for some $\alpha \in F$ and $\beta \in F^\times$.
Then 
\begin{itemize}
\item[(a)] $[\alpha,\beta)_{p,F}=[\alpha,\operatorname{N}_{F[x]/F}(f) \beta)_{p,F}$ for any $f \in F[x]$ with $\operatorname{N}_{F[x]/F}(f) \neq 0$.
\item[(b)] $[\alpha,\beta)_{p,F}=[\alpha+\beta,\beta)_{p,F}$.
\item[(c)] $[\alpha,\beta)_{p,F}=[\alpha+v^p-v,\beta)_{p,F}$ for any $v \in F$.
\item[(d)] For any $v \in F$ with $\beta+v^p \neq 0$, $[\alpha,\beta)_{p,F}=[\alpha',\beta+v^p)_{p,F}$ for some $\alpha' \in F$.
\end{itemize}
\end{lem}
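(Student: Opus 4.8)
The plan is to prove all four identities by an explicit change of generators inside the fixed copy $A=[\alpha,\beta)_{p,F}=F\langle x,y\rangle$. Throughout I write $K=F[x]$ for the Artin--Schreier subfield defined by $x^p-x=\alpha$, and I use the relation $yx-xy=y$ in the equivalent form $yx=(x+1)y$, so that conjugation by $y$ induces the generator $\sigma\colon x\mapsto x+1$ of $\operatorname{Gal}(K/F)$ and more generally $yf(x)=f(x+1)y$ for every $f\in K$. The guiding principle is that a symbol algebra is central simple of dimension $p^2$; hence, whenever I produce elements $x',y'\in A$ satisfying the defining relations of some $[\alpha'',\gamma)_{p,F}$ and generating $A$, the resulting surjection from the simple algebra $[\alpha'',\gamma)_{p,F}$ (of the same dimension $p^2$) is forced to be an isomorphism, which yields the asserted equality. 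Part (c) is the quick case: replacing $x$ by $x'=x+v$ with $v\in F$ central leaves the commutator relation untouched, and $(x')^p-x'=x^p+v^p-x-v=\alpha+(v^p-v)$ because $(x+v)^p=x^p+v^p$ in characteristic $p$.

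For part (a) I would set $y'=fy$ with $f\in K$ and $\operatorname{N}_{F[x]/F}(f)\neq0$, so that $f$ is a unit of $K$. Since $f$ commutes with $x$, the element $y'$ still satisfies $y'x-xy'=y'$, and the telescoping computation $(fy)^p=f\cdot\sigma(f)\cdots\sigma^{p-1}(f)\cdot y^p=\operatorname{N}_{F[x]/F}(f)\,\beta$, using $y^{i}fy^{-i}=\sigma^{i}(f)$, shows $(y')^p=\operatorname{N}_{F[x]/F}(f)\beta$; here $x$ together with $y=f^{-1}y'$ generate $A$. For part (b) the key is a characteristic-$p$ power computation. Taking $x'=x+y$, the commutator relation persists since $yx'-x'y=[y,x]+[y,y]=y$, and $x=x'-y$, $y$ generate $A$. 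The main point is the identity $(x+y)^p=x^p+y^p+y$, which I would obtain either from Jacobson's formula $(a+b)^p=a^p+b^p+\sum_{i=1}^{p-1}s_i(a,b)$, where the relation $[y,x]=y$ collapses $\operatorname{ad}(tx+y)^{p-1}(x)$ to $(-t)^{p-2}y$ and hence leaves only the surviving term $s_{p-1}=y$, or by a direct induction on the expansion using $yx=(x+1)y$. Granting it, $(x')^p-x'=(x^p-x)+y^p=\alpha+\beta$, so $x',y$ present $[\alpha+\beta,\beta)_{p,F}$.

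Part (d) is the substantive case. With $v\in F$ and $\beta+v^p\neq0$ I would take $y'=y+v$, so that $(y')^p=\beta+v^p$ at once. The difficulty is that $y'$ fails the commutator relation with $x$, so I must correct $x$ to some $x'=x+w$ with $y'x'-x'y'=y'$. Expanding and using centrality of $v$ reduces this to the single requirement $[y,w]=v$. Since $[y,f(x)]=(f(x+1)-f(x))y$ always lands in $Ky$, no $w\in K$ can work; instead I exploit the wrap-around $y^p=\beta$ and take $w=\tfrac{v}{\beta}xy^{p-1}$, for which $[y,w]=\tfrac{v}{\beta}[y,x]y^{p-1}=\tfrac{v}{\beta}y^{p}=v$. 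Then $x'=x+\tfrac{v}{\beta}xy^{p-1}$ and $y'=y+v$ satisfy $y'x'-x'y'=y'$ and $(y')^p=\beta+v^p$. To finish I would check that $x',y'$ generate $A$: from $x'y=x(y+v)=xy'$ one recovers $x=x'y(y')^{-1}$, and $y=y'-v$, so $\langle x',y'\rangle=A$. Conjugation by $y'$ sends $x'\mapsto x'+1$, whence $(x')^p-x'$ is fixed by $y'$ and visibly commutes with $x'$; being centralized by the generators it lies in $Z(A)=F$, giving the required $\alpha'=(x')^p-x'$, and the surjection $[\alpha',\beta+v^p)_{p,F}\twoheadrightarrow A$ is then an isomorphism.

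I expect the two genuine obstacles to be the noncommutative $p$-th power identity in part (b), where one must verify that the relation reduces Jacobson's Lie polynomials to a single surviving term $y$, and the construction of the corrector $w$ in part (d) together with the proof that the new generators still generate $A$; the remaining verifications are routine bookkeeping.
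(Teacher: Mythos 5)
Your proof is correct, and for parts (a)--(c) it follows the paper's argument almost verbatim: the same changes of generators $y\mapsto fy$, $x\mapsto x+y$, $x\mapsto x+v$, with the same telescoping computation $(fy)^p=f\sigma(f)\cdots\sigma^{p-1}(f)y^p=\operatorname{N}_{F[x]/F}(f)\beta$. The genuine differences lie in the two places where the paper cites external results and you argue from scratch. In (b), the paper simply quotes the identity $(x+y)^p-(x+y)=x^p-x+y^p$ from \cite[Lemma 3.1]{Chapman:2015}; your derivation via Jacobson's formula is correct --- indeed $\operatorname{ad}(tx+y)^{k}(x)=(-t)^{k-1}y$ for $k\geq 1$, so only $s_{p-1}=y$ survives (and the coefficient works out since $p-1\equiv -1 \pmod p$) --- making the lemma self-contained. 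In (d), the paper invokes \cite[Chapter 7, Lemma 10]{Albert}: since $(y+v)^p=\beta+v^p\in F^\times$, that lemma immediately yields a presentation with $\beta+v^p$ in the right slot. You instead prove this instance of Albert's lemma by hand, and your construction checks out: the corrector $w=\frac{v}{\beta}xy^{p-1}$ satisfies $[y,w]=\frac{v}{\beta}y^p=v$, which exactly repairs the commutator relation for $y'=y+v$; the identity $x'y=xy'$ (valid because $\frac{v}{\beta}y^p=v$) shows $x'$ and $y'$ generate the whole algebra; and the conjugation argument correctly places $\alpha'=(x')^p-x'$ in $F$. What each route buys: the paper's citation is shorter and rests on Albert's general principle that any element of a degree-$p$ algebra with $p$-th power in $F^\times$ can serve as the second generator of a symbol presentation, whereas your version is explicit and even exhibits $\alpha'$ computably. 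One small point worth making explicit in your write-up: your ``guiding principle'' step uses that the presented algebra $[\alpha',\beta+v^p)_{p,F}$ is simple of dimension exactly $p^2$ whenever the right slot is nonzero; this is the standard background fact from \cite[Chapter 7]{Albert} that the paper also presupposes, so nothing is circular, but it is the hypothesis that makes the ``surjection from a simple algebra of equal dimension'' argument close.
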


\begin{proof}
Let $f$ be an element in $F[x]$ with $\operatorname{N}_{F[x]/F}(f) \neq 0$.
Then $(fy)^p=\operatorname{N}_{F[x]/F}(f)y^p=\operatorname{N}_{F[x]/F}(f) \beta$.
Since $(fy) x-x (fy)=fy$, we have $[\alpha,\beta)_{p,F}=[x^p-x,(fy)^p)_{p,F}=[\alpha,\operatorname{N}_{F[x]/F}(f) \beta)_{p,F}$.

Write $z=x+y$.
By \cite[Lemma 3.1]{Chapman:2015}, $z$ satisfies $z^p-z=x^p-x+y^p=\alpha+\beta$. Since $y z-z y=y$ we have $[\alpha,\beta)_{p,F}=[z^p-z,y^p)_{p,F}=[\alpha+\beta,\beta)_{p,F}$.

Write $t=x+v$. Then $t^p-t=x^p-x+v^p-v=\alpha+v^p-v$.
Since $y t-t y=y$, we have $[\alpha,\beta)_{p,F}=[\alpha+v^p-v,\beta)_{p,F}$.

Let $v \in F$ be such that $\beta+v^p \neq 0$.
Consider the element $y+v$.
Since $(y+v)^p=\beta+v^p \in F^\times$, by \citep[Chapter 7, Lemma 10]{Albert} we have $[\alpha,\beta)_{p,F}=[\alpha',\beta+v^p)_{p,F}$ for some $\alpha' \in F$.
\end{proof}

Note that by Lemma \ref{addtofirst} $(a)$, we have $[\alpha,\beta)_{p,F}=[\alpha,-\beta)_{p,F}$ for any symbol algebra $[\alpha,\beta)_{p,F}$.

\begin{lem}\label{two}
Consider $[\alpha,\beta)_{p,F}$ and $[\gamma,\delta)_{p,F}$ for some $\alpha,\gamma \in F$ and $\beta,\delta \in F^\times$.
Then 
\begin{itemize}
\item[(a)] $[\alpha,\beta)_{p,F} \otimes [\gamma,\delta)_{p,F}=[\alpha+\gamma,\beta)_{p,F} \otimes [\gamma,\beta^{-1} \delta).$
\item[(b)] If $\beta+\delta \neq 0$ then $[\alpha,\beta)_{p,F} \otimes [\gamma,\delta)_{p,F}=[\alpha+\gamma,\beta+\delta)_{p,F} \otimes C$ for some symbol algebra $C$ of degree $p$.
\end{itemize}
\end{lem}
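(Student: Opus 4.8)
The plan is to deduce both parts from the basic symbol identities of \Rref{rem1} together with the elementary fact that two central simple $F$-algebras of equal dimension are isomorphic as soon as they are Brauer equivalent; since both sides of the identity in part (a) have dimension $p^4$, it suffices to track Brauer classes, which I write additively in $\prescript{}{p}{\Br}(F)$. For part (a) I would expand the class of the right-hand side: additivity in the first slot (\Rref{rem1}(c)) gives $[\alpha+\gamma,\beta)_{p,F}=[\alpha,\beta)_{p,F}\otimes[\gamma,\beta)_{p,F}$, and multiplicativity in the second slot (\Rref{rem1}(b)) gives $[\gamma,\beta^{-1}\delta)_{p,F}=[\gamma,\beta^{-1})_{p,F}\otimes[\gamma,\delta)_{p,F}$. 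The two extra terms cancel, since $[\gamma,\beta)_{p,F}\otimes[\gamma,\beta^{-1})_{p,F}=[\gamma,1)_{p,F}=M_p(F)$ by \Rref{rem1}(b) and (a). What survives is precisely $[\alpha,\beta)_{p,F}\otimes[\gamma,\delta)_{p,F}$, and the equal-dimension remark upgrades this Brauer equivalence to an isomorphism.

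Part (b) I would instead prove directly inside $A=[\alpha,\beta)_{p,F}\otimes[\gamma,\delta)_{p,F}$. Writing $x,y$ and $x',y'$ for the standard generators of the two tensor factors, I set $w=y+y'$ and $u=x+x'$. Because the two sets of generators commute across the tensor product and $\operatorname{char}(F)=p$, the freshman's dream yields $w^p=y^p+y'^p=\beta+\delta$ and $u^p-u=(x^p-x)+(x'^p-x')=\alpha+\gamma$, while a short commutator computation, using $yx-xy=y$, $y'x'-x'y'=y'$, and the vanishing of the cross terms $yx'-x'y$ and $y'x-xy'$, gives $wu-uw=w$. Since $\beta+\delta\neq 0$, the elements $u,w$ satisfy exactly the defining relations of the symbol algebra $[\alpha+\gamma,\beta+\delta)_{p,F}$; as that algebra is central simple, the resulting $F$-algebra map onto the subalgebra $B=F\langle u,w\rangle$ has trivial kernel, so $B\cong[\alpha+\gamma,\beta+\delta)_{p,F}$ and $\dim_F B=p^2$.

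To finish I would invoke the double centralizer theorem: $B$ is a central simple subalgebra of the central simple algebra $A$, so $A\cong B\otimes_F C$ with $C=\Ce{B}$ central simple over $F$ of dimension $p^4/p^2=p^2$, that is, of degree $p$. The one point that needs care is that this centralizer $C$ is genuinely a \emph{symbol} algebra and not merely an anonymous degree-$p$ algebra; here I would appeal to Albert's theorem that every central simple algebra of degree $p$ over a field of characteristic $p$ is cyclic, hence of the form $[\alpha',\beta')_{p,F}$. This identification of $C$ is the only substantive input and is the step I expect to be the real obstacle, everything else reducing to the two freshman's-dream computations and the commutator check above.
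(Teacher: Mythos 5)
Your proof is correct, but both halves take a genuinely different route from the paper's. For part (a), the paper never leaves the tensor product: writing the two factors as $F\langle x,y\rangle$ and $F\langle z,w\rangle$, it exhibits the explicit internal decomposition $F\langle x+z,y\rangle\otimes F\langle z,y^{-1}w\rangle$, checking directly that each pair satisfies the symbol relations and that the two pairs commute with each other. Your route---expand $[\alpha+\gamma,\beta)_{p,F}$ and $[\gamma,\beta^{-1}\delta)_{p,F}$ via \Rref{rem1}(b),(c), cancel $[\gamma,\beta)_{p,F}\otimes[\gamma,\beta^{-1})_{p,F}=M_p(F)\otimes M_p(F)$ up to Brauer equivalence, then upgrade to isomorphism by equality of dimensions and Wedderburn---is shorter and legitimate (no circularity, since \Rref{rem1} is quoted from Albert independently of this lemma), but it is non-constructive, whereas the paper's version hands you the new generators explicitly, in the same style it reuses in \Pref{change_left}. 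For part (b) you and the paper build the identical subalgebra $F\langle x+z,\,y+w\rangle\cong[\alpha+\gamma,\beta+\delta)_{p,F}$ (your freshman's-dream and commutator computations match the paper's) and both invoke the double centralizer theorem; the difference is the final step, which you rightly flagged as the substantive one. You settle it with Albert's theorem that every central simple algebra of degree $p$ in characteristic $p$ is cyclic---true, but a deep result. The paper avoids it by exhibiting the concrete element $y^{-1}w$, which commutes with both $x+z$ and $y+w$ and hence lies in the centralizer $C$, and which satisfies $(y^{-1}w)^p=\beta^{-1}\delta\in F^\times$; the much lighter \cite[Chapter 7, Lemma 10]{Albert} (a degree-$p$ algebra containing such an element is a symbol algebra) then identifies $C$ at once. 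In short, your argument trades the paper's explicitness for two heavier black boxes; both are valid, but the paper's stays elementary and essentially self-contained.
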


\begin{proof}
Write $[\alpha,\beta)_{p,F}=F \langle x,y : x^p-x=\alpha, y^p=\beta, yx-xy=y \rangle$ and $[\gamma,\delta)_{p,F}=F \langle z,w : z^p-z=\gamma, w^p=\delta, wz-zw=w \rangle$.
Note that the elements $x+z$ and $y$ commute with the elements $z$ and $y^{-1} w$, and that 
$$(x+z)^p-(x+z)=x^p-x+z^p-z=\alpha+\gamma,$$
$$y (x+z)-(x+z) y=y,$$
$$(y^{-1}w)^p=y^{-p} w^p=\beta^{-1} \delta, \enspace \operatorname{and}$$
$$(y^{-1} w) z-z (y^{-1} w)=y^{-1} (w z-z w)=y^{-1} w.$$
Therefore $[\alpha,\beta)_{p,F} \otimes [\gamma,\delta)_{p,F}=F \langle x,y,z,w \rangle = F\langle x+z,y \rangle \otimes F \langle z, y^{-1} w \rangle=[\alpha+\gamma,\beta)_{p,F} \otimes [\gamma,\beta^{-1} \delta)_{p,F}$.

Now, the elements $x+z$ and $y+w$ satisfy
$$(y+w)^p=y^p+w^p=\beta+\delta, \enspace \operatorname{and}$$
$$(y+w) (x+z)-(x+z) (y+w)=y+w.$$
Therefore $F \langle x+z,y+w \rangle=[\alpha+\gamma,\beta+\delta)_{p,F}$.
Hence $[\alpha,\beta)_{p,F} \otimes [\gamma,\delta)_{p,F}=[\alpha+\gamma,\beta+\delta)_{p,F} \otimes C$ for some central simple algebra $C$ over $F$ of degree $p$.
Since $C$ contains $y^{-1}w$ which satisfies $(y^{-1}w)^p \in F^\times$, $C$ must be a symbol algebra (\cite[Chapter 7, Lemma 10]{Albert}).
\end{proof}

\section{Bounding the Symbol Length}

\begin{prop}\label{change_left}
Let $p$ be a prime integer and let $F$ be a field with $\operatorname{char}(F) = p$.
Consider the algebras $[\alpha_1,\beta_1)_{p,F},\dots,[\alpha_k,\beta_k)_{p,F}$ for some $\alpha_1,\dots,\alpha_k \in F$ and $\beta_1,\dots,\beta_k \in F^\times$ where $k$ is some positive integer.
For each $i \in \{1,\dots,k\}$, write $[\alpha_i,\beta_i)_{p,F}=F \langle x_i, y_i : x_i^p-x_i=\alpha_i, y_i^p=\beta_i, y_i x_i-x_i y_i=y_i \rangle$.

Let $\varphi$ be the homogeneous polynomial form defined on $$V=F \oplus F \oplus F[x_1] \oplus \dots \oplus F[x_k]$$ by $$\varphi(u,v,f_1,\dots,f_k)=u^p (\alpha_1+\dots+\alpha_k)-u^{p-1} v+v^p+\operatorname{N}_{F[x_1]/F}(f_1) \beta_1+\dots+\operatorname{N}_{F[x_k]/F}(f_k) \beta_k.$$

\begin{itemize}
\item[(a)] For every $(u,v,f_1,\dots,f_k) \in V$ with $u \neq 0$, we have $\bigotimes_{i=1}^k [\alpha_i,\beta_i)_{p,F}=\bigotimes_{i=1}^k C_i$ where $C_1,\dots,C_k$ are symbol algebras of degree $p$ and\\ $C_1=[\varphi(1,\frac{v}{u},\frac{f_1}{u},\dots,\frac{f_k}{u}),\beta')_{p,F}$ for some $\beta' \in F^\times$.
\item[(b)] Let $(0,v,f_1,\dots,f_k) \in V$ such that the following elements are nonzero: $f_1,\dots,f_t$ for some $t \in \{1,\dots,k\}$, $\sum_{i=1}^s \operatorname{N}_{F[x_i]/F}(f_i) \beta_i$ for any $s \in \{1,\dots,t\}$ and $(\sum_{i=1}^t \operatorname{N}_{F[x_i]/F}(f_i) \beta_i)+v^p$. Then $\bigotimes_{i=1}^k [\alpha_i,\beta_i)_{p,F}=\bigotimes_{i=1}^k C_i$ where $C_1,\dots,C_k$ are symbol algebras of degree $p$ and $C_1=[\alpha',(\sum_{i=1}^t \operatorname{N}_{F[x_i]/F}(f_i) \beta_i)+v^p)_{p,F}$ for some $\alpha' \in F$.
\item[(c)] If there exists $(u,v,f_1,\dots,f_k) \in V \setminus \{(0,\dots,0)\}$ such that $\varphi(u,v,f_1,\dots,f_k)=0$, then $\bigotimes_{i=1}^k [\alpha_i,\beta_i)_{p,F}=\bigotimes_{i=1}^k C_i$ where $C_1=M_p(F)$ and $C_2,\dots,C_k$ are symbol algebras of degree $p$.
\end{itemize}
\end{prop}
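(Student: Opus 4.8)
The plan is to obtain each $C_1$ by pushing all the relevant data into a single slot using only the slot manipulations of Lemma~\ref{addtofirst} and Lemma~\ref{two}. The engine behind part~(a) is the identity, valid for any $g\in F[x]$ with $\operatorname{N}_{F[x]/F}(g)\neq0$,
\[
[\alpha,\beta)_{p,F}=[\alpha,\operatorname{N}_{F[x]/F}(g)\beta)_{p,F}=[\alpha+\operatorname{N}_{F[x]/F}(g)\beta,\operatorname{N}_{F[x]/F}(g)\beta)_{p,F},
\]
where the first equality is Lemma~\ref{addtofirst}(a) and the second is Lemma~\ref{addtofirst}(b). Thus a norm term can be transferred into the first slot at the price of turning the second slot into that same norm term.

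For part~(a), since $u\neq0$ I may set $s=v/u$ and $g_i=f_i/u$, and homogeneity of degree $p$ gives $\varphi(1,s,g_1,\dots,g_k)=u^{-p}\varphi(u,v,f_1,\dots,f_k)=\sum_i\alpha_i+(s^p-s)+\sum_i\operatorname{N}_{F[x_i]/F}(g_i)\beta_i$. First, for every $i$ with $\operatorname{N}_{F[x_i]/F}(g_i)\neq0$ I replace $[\alpha_i,\beta_i)_{p,F}$ by $[\alpha_i+\operatorname{N}_{F[x_i]/F}(g_i)\beta_i,\operatorname{N}_{F[x_i]/F}(g_i)\beta_i)_{p,F}$ via the displayed identity, leaving the other symbols alone (for them the norm term is $0$, matching their absent contribution to $\varphi$). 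Writing the resulting symbols as $[a_i,b_i)_{p,F}$, I then iterate Lemma~\ref{two}(a) to collect first slots, turning the tensor product into $[\sum_i a_i,b_1)_{p,F}\otimes\bigotimes_{i=2}^k[a_i,b_1^{-1}b_i)_{p,F}$, all factors degree-$p$ symbols, with $\sum_i a_i=\sum_i\alpha_i+\sum_i\operatorname{N}_{F[x_i]/F}(g_i)\beta_i$. Finally Lemma~\ref{addtofirst}(c) adds the term $s^p-s$ to the first slot, so that $C_1=[\varphi(1,s,g_1,\dots,g_k),\beta')_{p,F}$ with $\beta'=b_1\in F^\times$.

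Part~(b) is the second-slot analogue. For $i\le t$ I use Lemma~\ref{addtofirst}(a) to write $[\alpha_i,\beta_i)_{p,F}=[\alpha_i,\operatorname{N}_{F[x_i]/F}(f_i)\beta_i)_{p,F}$, and then combine the first $t$ symbols one at a time with Lemma~\ref{two}(b); the hypothesis that each partial sum $\sum_{i=1}^s\operatorname{N}_{F[x_i]/F}(f_i)\beta_i$ is nonzero is exactly the condition $\beta+\delta\neq0$ needed at each step, and it leaves a factor $[\sum_{i=1}^t\alpha_i,\sum_{i=1}^t\operatorname{N}_{F[x_i]/F}(f_i)\beta_i)_{p,F}$ alongside $t-1$ further symbols. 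As $(\sum_{i=1}^t\operatorname{N}_{F[x_i]/F}(f_i)\beta_i)+v^p\neq0$, Lemma~\ref{addtofirst}(d) then adds $v^p$ to that second slot, giving the asserted $C_1$.

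For part~(c), suppose $\varphi$ vanishes at some nonzero point. If $u\neq0$, part~(a) gives $C_1=[\varphi(1,v/u,f_1/u,\dots,f_k/u),\beta')_{p,F}$, and this first slot equals $u^{-p}\varphi(u,v,f_1,\dots,f_k)=0$, so $C_1=[0,\beta')_{p,F}=M_p(F)$ by Remark~\ref{rem1}(a). The main case, and the one requiring care, is $u=0$, where $v^p+\sum_i\operatorname{N}_{F[x_i]/F}(f_i)\beta_i=0$. If some nonzero $f_i$ has $\operatorname{N}_{F[x_i]/F}(f_i)=0$, then $F[x_i]$ is not a field, so $\alpha_i$ lies in the additive subgroup $\{t^p-t:t\in F\}$ and Lemma~\ref{addtofirst}(c) gives $[\alpha_i,\beta_i)_{p,F}=[0,\beta_i)_{p,F}=M_p(F)$, which we take as $C_1$. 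Otherwise I reorder so that $f_1,\dots,f_t$ are the nonzero slots, norm-adjust the second slots as in part~(b), and combine left to right with Lemma~\ref{two}(b) as long as the partial sums $P_s=\sum_{i=1}^s\operatorname{N}_{F[x_i]/F}(f_i)\beta_i$ are nonzero. If some $P_s$ vanishes, let $s^*$ be the first such index; combining the first $s^*-1$ symbols produces a factor with second slot $P_{s^*-1}$, and the $s^*$-th symbol has second slot $-P_{s^*-1}$, so by $[\gamma,-\delta)_{p,F}=[\gamma,\delta)_{p,F}$ (Lemma~\ref{addtofirst}(a)) and Remark~\ref{rem1}(c) the two combine to $M_p(F)\otimes[\,\cdot\,,P_{s^*-1})_{p,F}$. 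If instead no $P_s$ vanishes, then $v\neq0$ and combining all $t$ symbols leaves a factor with second slot $P_t=-v^p$; since $-v^p$ is a nonzero $p$-th power, Lemma~\ref{addtofirst}(a) and Remark~\ref{rem1}(a) show this factor is already $M_p(F)$. In every branch one factor is $M_p(F)$ and the rest are degree-$p$ symbols. The crux is thus the bookkeeping of this last case, ensuring that the sequential combination always reaches a split.
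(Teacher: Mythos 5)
Your proposal is correct and takes essentially the same route as the paper's own proof: norm-adjust the slots via Lemma~\ref{addtofirst}(a)--(b), merge first (resp.\ second) slots by iterating Lemma~\ref{two}(a) (resp.\ (b)) under the partial-sum hypotheses, and in part~(c) split on the minimal index where a partial sum vanishes; your only deviations are cosmetic justifications in part~(c) (Artin--Schreier reducibility plus Lemma~\ref{addtofirst}(c) where the paper invokes the zero-divisor criterion of Remark~\ref{rem1}(d), and rewriting the slot $-v^p$ as a $p$-th power where the paper again exhibits a zero divisor). One trivial point you leave implicit but should state: when $u=0$, not all $f_i$ can vanish at a nontrivial zero, since then $\varphi(0,v,0,\dots,0)=v^p\neq 0$.
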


\begin{proof}
\sloppy
Let $(u,v,f_1,\dots,f_k) \in V$ with $u \neq 0$.
For each $i \in \{1,\dots,k\}$ with $\operatorname{N}_{F[x_i]/F}(\frac{f_i}{u}) \neq 0$, we apply Lemma \ref{addtofirst} to change $[\alpha_i,\beta_i)_{p,F}$ to $[\alpha_i+\operatorname{N}_{F[x_i]/F}(\frac{f_i}{u}) \beta_i,\operatorname{N}_{F[x_i]/F}(\frac{f_i}{u}) \beta_i)_{p,F}$.
Then for each $i \in \{2,\dots,k\}$, we apply Lemma \ref{two} (a) on the first and $i$th symbol algebras.
The first algebra in the tensor product obtained after these modifications is $[\sum_{i=1}^k (\alpha_i+\operatorname{N}_{F[x_i]/F}(\frac{f_i}{u}) \beta_i),\operatorname{N}_{F[x_1]/F}(\frac{f_1}{u}) \beta_1)_{p,F}$ or $[\sum_{i=1}^k (\alpha_i+\operatorname{N}_{F[x_i]/F}(\frac{f_i}{u}) \beta_i),\beta_1)_{p,F}$ depending on the value of $\operatorname{N}_{F[x_1]/F}(\frac{f_1}{u})$.
By Lemma \ref{addtofirst} $(c)$ we can add $(\frac{v}{u})^p-\frac{v}{u}$ to the left slot of the first algebra. That proves part $(a)$.
If $\varphi(u,v,f_1,\dots,f_k)=0$ then also $\varphi(1,\frac{v}{u},\frac{f_1}{u},\dots,\frac{f_k}{u})=0$, which means that the first algebra in the modified tensor product is a matrix algebra.

\sloppy
Let $(0,v,f_1,\dots,f_k) \in V$ such that the following elements are nonzero: $f_1,\dots,f_t$ for some $t \in \{1,\dots,k\}$, $\sum_{i=1}^s \operatorname{N}_{F[x_i]/F}(f_i) \beta_i$ for any $s \in \{1,\dots,t\}$ and $(\sum_{i=1}^t \operatorname{N}_{F[x_i]/F}(f_i) \beta_i)+v^p$.
For each $i \in \{1,\dots,t\}$ change $[\alpha_i,\beta_i)_{p,F}$ to $[\alpha_i,\operatorname{N}_{F[x_i]/F}(f_i) \beta_i)_{p,F}$.
Then for each $i \in \{2,\dots,t\}$ apply Lemma \ref{two} (b) on the first and the $i$th symbol algebras.
The first algebra in the resulting tensor product is $[\alpha_1+\dots+\alpha_t,\sum_{i=1}^t \operatorname{N}_{F[x_i]/F}(f_i) \beta_i)_{p,F}$. Then by Lemma \ref{addtofirst} (d) we can change this algebra to $[\alpha',(\sum_{i=1}^t \operatorname{N}_{F[x_i]/F}(f_i) \beta_i)+v^p)_{p,F}$ for some $\alpha' \in F$. That proves part $(b)$.

Let $(0,v,f_1,\dots,f_k) \in V \setminus \{(0,\dots,0)\}$ such that $\varphi(0,v,f_1,\dots,f_k)=0$.
If $f_1=\dots=f_k=0$ then $\varphi(0,v,f_1,\dots,f_k)=v^p=0$ which is impossible for $v \in F^\times$.
Therefore at least one $f_i$ is nonzero.
If there is one $f_i$ which is nonzero and $\operatorname{N}_{F[x_i]/F}(f_i)=0$ then the symbol algebra $[\alpha_i,\beta_i)_{p,F}$ contains a zero divisor and so it is a matrix algebra.
Assume every nonzero $f_i$ has $\operatorname{N}_{F[x_1]/F}(f_i) \neq 0$.
Without loss of generality we can assume $f_1,\dots,f_t$ are nonzero for some $t \in \{1,\dots,k\}$ and $f_i=0$ for $i>t$.
Assume $\sum_{i=1}^s \operatorname{N}_{F[x_i]/F}(f_i) \beta_i=0$ for some $s \in \{1,\dots,t\}$. Assume $s$ is minimal.
Then we can change the tensor product so that the first algebra is $[\alpha',\sum_{i=1}^{s-1} \operatorname{N}_{F[x_i]/F}(f_i) \beta_i)_{p,F}$ and the $s$th algebra is $[\alpha_s,\operatorname{N}_{F[x_s]/F}(f_s) \beta_s)_{p,F}$.
Since $\sum_{i=1}^{s-1} \operatorname{N}_{F[x_i]/F}(f_i) \beta_i=-\operatorname{N}_{F[x_s]/F}(f_s) \beta_s$, we have $[\alpha',\sum_{i=1}^{s-1} \operatorname{N}_{F[x_i]/F}(f_i) \beta_i)_{p,F} \otimes [\alpha_s,\operatorname{N}_{F[x_s]/F}(f_s) \beta_s)_{p,F}=[\alpha'+\alpha_s,\operatorname{N}_{F[x_s]/F}(f_s) \beta_s)_{p,F} \otimes M_p(F)$.
Assume $\sum_{i=1}^s \operatorname{N}_{F[x_i]/F}(f_i) \beta_i \neq 0$ for all $s \in \{1,\dots,t\}$.
Then the tensor product can be changed such that the first algebra is $[\alpha',\sum_{i=1}^t \operatorname{N}_{F[x_i]/F}(f_i) \beta_i)_{p,F}$.
This algebra contains a zero divisor because $\varphi(0,v,f_1,\dots,f_k)=(\sum_{i=1}^t \operatorname{N}_{F[x_i]/F}(f_i) \beta_i)+v^p=0$, which means it is a matrix algebra. That completes part $(c)$.
\end{proof}

\begin{thm}\label{linkage}
Let $p$ be a prime integer and let $F$ be a field with $\operatorname{char}(F) = p$.
Assume the maximal dimension of an anisotropic homogeneous polynomial form of degree $p$ over $F$ is a finite integer $d$.
Then every two tensor products $A=\bigotimes_{i=1}^k [\alpha_i,\beta_i)_{p,F}$ and $B=\bigotimes_{i=1}^\ell [\gamma_i,\delta_i)_{p,F}$ with $(k+\ell) p \geq d-1$ can be changed such that $\alpha_1=\gamma_1$.
\end{thm}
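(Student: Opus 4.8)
The plan is to realize the desired common left slot as a common value of the two ``left-slot forms'' attached to $A$ and $B$ by \Pref{change_left}, and to produce such a value from an isotropic vector of a single combined form whose dimension the hypothesis forces above $d$. Write $c_0=\alpha_1+\dots+\alpha_k$, $c_1=\gamma_1+\dots+\gamma_\ell$, let $z_j$ denote the Artin--Schreier generator of the $j$-th factor of $B$, and let $\varphi_A,\varphi_B$ be the forms of \Pref{change_left} attached to $A,B$. By \Pref{change_left}(a) the set of left slots attainable for the first factor of $A$ is exactly $S_A=\{\varphi_A(1,v,f_1,\dots,f_k)\}=c_0+\wp(F)+\{\sum_i \operatorname{N}_{F[x_i]/F}(f_i)\beta_i\}$, where $\wp(v)=v^p-v$, and similarly for $S_B$; so it suffices to show $S_A\cap S_B\neq\varnothing$. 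First I would dispose of the isotropic case: if $\varphi_A$ is isotropic on its space $V_A$ (of dimension $kp+2$), then \Pref{change_left}(c) lets us take the first factor of $A$ to be $M_p(F)=[\gamma_1,1)_{p,F}$ by \Rref{rem1}(a), whence its left slot may be set to $\gamma_1$; likewise for $B$. So assume both $\varphi_A$ and $\varphi_B$ anisotropic.

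Next I would form the single homogeneous degree-$p$ form
$$\bar\Psi(u,v,f_1,\dots,f_k,g_1,\dots,g_\ell)=u^p(c_0-c_1)-u^{p-1}v+v^p+\sum_{i=1}^k \operatorname{N}_{F[x_i]/F}(f_i)\beta_i-\sum_{j=1}^\ell \operatorname{N}_{F[z_j]/F}(g_j)\delta_j$$
on a space of dimension $(k+\ell)p+2$. This is precisely the form attached by \Pref{change_left} to the combined product $A\otimes\bigotimes_{j=1}^\ell[-\gamma_j,-\delta_j)_{p,F}$: the two Artin--Schreier variables of $\varphi_A$ and $\varphi_B$ have been merged into the single variable $v$, using that $x\mapsto x^p$ is additive in characteristic $p$. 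By hypothesis $(k+\ell)p+2\ge d+1>d$, so $\bar\Psi$ is isotropic.

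Take a nontrivial zero $(\bar u,\bar v,\bar f,\bar g)$. If $\bar u\neq0$, normalise $\bar u=1$; then $\varphi_A(1,\bar v,\bar f)=c_1+\sum_j\operatorname{N}_{F[z_j]/F}(\bar g_j)\delta_j=\varphi_B(1,0,\bar g)$ is a common element of $S_A\cap S_B$, and applying \Pref{change_left}(a) to $A$ with $(1,\bar v,\bar f)$ and to $B$ with $(1,0,\bar g)$ makes the two first left slots equal. If $\bar u=0$, set $m:=\bar v^p+\sum_i\operatorname{N}_{F[x_i]/F}(\bar f_i)\beta_i=\sum_j\operatorname{N}_{F[z_j]/F}(\bar g_j)\delta_j$. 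When $m=0$ one of $\varphi_A,\varphi_B$ acquires a nontrivial zero, so \Pref{change_left}(c) finishes as above; when $c_0=c_1$ the value $c_0$ already lies in $S_A\cap S_B$ via \Pref{change_left}(a) with trivial data. Otherwise ($m\neq0$, $c_0\neq c_1$) I would try to lift the zero along the $u$-line: the degree-$p$ part vanishes, so $\bar\Psi(u,\bar v,\bar f,\bar g)=u^{p-1}\bigl(u(c_0-c_1)-\bar v\bigr)$, and $u=\bar v/(c_0-c_1)$ returns a genuine zero with $u\neq0$ whenever $\bar v\neq0$, reducing to the first case.

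The main obstacle is the degenerate corner where both forms are anisotropic, $\bar u=0$, $m\neq0$, $c_0\neq c_1$, and every zero of the slice $\bar\Psi|_{u=0}$ that one can find has $\bar v=0$, so the lift fails. There one still has a common nonzero norm value $m$, which by \Pref{change_left}(b) yields a common \emph{right} slot (i.e.\ $\beta_1=\delta_1$-type linkage), but no Artin--Schreier relation pinning down $c_1-c_0$; securing the stated equality $\alpha_1=\gamma_1$ amounts exactly to showing $c_1-c_0\in\wp(F)+\{\Lambda(f,g)\}$, with $\Lambda=\sum_i\operatorname{N}_{F[x_i]/F}(f_i)\beta_i-\sum_j\operatorname{N}_{F[z_j]/F}(g_j)\delta_j$, equivalently that $\bar\Psi$ has a zero with $u\neq0$. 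The role of the bound $(k+\ell)p\ge d-1$ is that it is the minimal bound making $\bar\Psi$ (dimension $(k+\ell)p+2$) isotropic while leaving the slice $\{u=0\}$ (dimension $(k+\ell)p+1\le d$) possibly anisotropic; when that slice is anisotropic the isotropic vector of $\bar\Psi$ is automatically off $u=0$ and we win, so the delicate case is precisely when the slice is itself isotropic. This is the step I expect to demand the most care, and where the cross term $-u^{p-1}v$ is essential.
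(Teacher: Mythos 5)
Your setup coincides, up to notation, with the paper's own: your combined form $\bar\Psi(u,v,f,g)$ is literally the form $\varphi(u,v,f_1,\dots,f_k)-\psi(u,0,f_1',\dots,f_\ell')$ that the paper's proof of the theorem uses (same merged Artin--Schreier variable $v$, same shared $u$, dimension $(k+\ell)p+2\geq d+1$), and your treatment of the cases $\bar u\neq 0$, $m=0$, and $c_0=c_1$ is correct. The lifting identity $\bar\Psi(u,\bar v,\bar f,\bar g)=u^{p-1}\bigl(u(c_0-c_1)-\bar v\bigr)$, which converts a zero with $\bar u=0$, $\bar v\neq 0$ into one with $u\neq 0$, is a genuinely nice observation that the paper does not need. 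But the corner you flag and leave open --- $\bar u=\bar v=0$, $m\neq 0$, $c_0\neq c_1$ --- is a real case, and your argument is incomplete precisely there.

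Two concrete points. First, you have miscalibrated the target: securing $\alpha_1=\gamma_1$ does \emph{not} ``amount exactly to'' $S_A\cap S_B\neq\varnothing$, i.e.\ to a zero of $\bar\Psi$ with $u\neq0$, because $S_A$ only records the left slots reachable via \Pref{change_left}(a); other symbol modifications (e.g.\ \Lref{addtofirst}(b) and \Lref{two}) reach left slots outside $c_0+\wp(F)+\{\sum_i \operatorname{N}_{F[x_i]/F}(f_i)\beta_i\}$, and nothing in the dimension hypothesis forces $\bar\Psi$ to have a zero off the hyperplane $u=0$. Second --- and this is the missing ingredient --- the paper closes exactly your corner by an external linkage theorem: anisotropy of $\varphi_A$ and $\varphi_B$ guarantees (after reordering so the nonzero $f_i$, $g_j$ come first) the partial-sum hypotheses of \Pref{change_left}(b), so both products can be rewritten with first factors $[\alpha',m)_{p,F}$ and $[\gamma',m)_{p,F}$ sharing the \emph{right} slot $m$; then \cite[Theorem 3.2]{Chapman:2015} states that tensor products of degree-$p$ symbol algebras sharing a common right slot can be further modified to share a common left slot (for $p=2$ this is the classical fact that quaternion algebras sharing an inseparable quadratic subfield also share a separable one). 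You arrived at the common right slot yourself but lacked this right-to-left conversion; with it, your degenerate case closes immediately, and without it the proof cannot be finished by isotropy counting on $\bar\Psi$ alone.
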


\begin{proof}
Let $\varphi$ and $\psi$ be the homogeneous polynomial forms of degree $p$ as constructed in Proposition \ref{change_left} for $A$ and $B$ respectively.
If $\varphi$ has a nontrivial zero then the first algebra in $A$ can be assumed to be a matrix algebra, and so it can be written as $[\gamma_1,1)_{p,F}$ and the statement follows.
Similarly, the statement follows when $\psi$ has a nontrivial zero.

\sloppy
Assume that both forms are anisotropic.
The form $\varphi(u,v,f_1,\dots,f_k)-\psi(u,0,f_1',\dots,f_\ell')$ 
is of dimension $(k+\ell)p+2$ over $F$.
By assumption, $(k+\ell) p+2 \geq d+1$, which means that this form has a nontrivial zero, i.e. $\varphi(u,v,f_1,\dots,f_k)=\psi(u,0,f_1',\dots,f_\ell')$ for some $u,v,f_1,\dots,f_k,f_1',\dots,f_\ell'$ such that not all of them are $0$.
If $u \neq 0$ then $\varphi(1,\frac{v}{u},\frac{f_1}{u},\dots,\frac{f_k}{u})=\psi(1,0,\frac{f_1'}{u},\dots,\frac{f_\ell'}{u})$.
By Proposition \ref{change_left} (a) we can change both tensor products to have $\varphi(1,\frac{v}{u},\frac{f_1}{u},\dots,\frac{f_k}{u})$ in the left slot of the first algebra.

\sloppy
Assume $u=0$.
At least one of the elements $v,f_1,\dots,f_k,f_1',\dots,f_\ell'$ is nonzero.
If $f_i'=0$ for every $i \in \{1,\dots,\ell\}$ then $\psi(0,0,f_1',\dots,f_\ell')=0$ and so $\varphi(0,v,f_1,\dots,f_k)=0$, but since $\varphi$ is anisotropic we get $v=f_1=\dots=f_k=0$, contradiction. Therefore $f_i' \neq 0$ for at least one $i$ in $\{1,\dots,\ell\}$. If $f_i=0$ for every $i \in \{1,\dots,k\}$ then $v^p=\psi(0,0,f_1',\dots,f_\ell')$, which means $0=\psi(0,0,f_1',\dots,f_\ell')-v^p=\psi(0,-v,f_1',\dots,f_\ell')$, i.e. $\psi$ has a nontrivial zero, contradiction.
Therefore $f_i \neq 0$ for at least one $i$ in $\{1,\dots,k\}$.
By changing the order of the symbol algebras, we can assume $f_1,\dots,f_t \neq 0$ for some $t \in \{1,\dots,k\}$, $f_i=0$ for every $i$ with $i \geq t+1$, $f_1',\dots,f'_r \neq 0$ for some $r \in \{1,\dots,\ell\}$ and $f_i'=0$ for every $i$ with $i \geq r+1$.
Both $(0,v,f_1,\dots,f_k)$ and $(0,0,f_1',\dots,f_
\ell')$ meet the requirements of Proposition \ref{change_left} (b), and so we can change the first tensor product so that it has $\varphi(0,v,f_1,\dots,f_k)$ in the second slot of the first symbol algebra and the second tensor product so that it has $\psi(0,0,f_1',\dots,f_\ell')$ in the second slot of the first symbol algebra.
The statement then follows from \cite[Theorem 3.2]{Chapman:2015}.
\end{proof}

\begin{cor}\label{corcharp}
Let $p$ be a prime integer and let $F$ be a field with $\operatorname{char}(F) = p$.
Assume the maximal dimension of an anisotropic homogeneous polynomial form of degree $p$ over $F$ is a finite integer $d$ greater than 1.
Then the symbol length of $p$-algebras of exponent $p$ over $F$ is bounded from above by $\left \lceil \frac{d-1}{p} \right \rceil -1$.
\end{cor}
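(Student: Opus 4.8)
The plan is to prove the bound by a descent on symbol length, with Proposition~\ref{change_left}(c) serving as the reduction engine. Recall from the introduction that every $p$-algebra of exponent $p$ is Brauer equivalent to a tensor product of symbol algebras. So let $A$ be such an algebra, let $n$ denote its symbol length, and fix a representation of minimal length $A=\bigotimes_{i=1}^n [\alpha_i,\beta_i)_{p,F}$. To this representation I would attach the degree-$p$ homogeneous polynomial form $\varphi$ constructed in Proposition~\ref{change_left}, living on $V=F\oplus F\oplus F[x_1]\oplus\dots\oplus F[x_n]$. Since each $F[x_i]$ is $p$-dimensional over $F$, the form $\varphi$ has dimension $np+2$.

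The crux is that minimality of $n$ forces $\varphi$ to be anisotropic. Indeed, were $\varphi$ to have a nontrivial zero, Proposition~\ref{change_left}(c) would rewrite the product as $M_p(F)\otimes C_2\otimes\dots\otimes C_n$ with $C_2,\dots,C_n$ symbol algebras of degree $p$; since $M_p(F)$ is split, this would express $A$ using only $n-1$ symbols, contradicting the choice of $n$. Hence $\varphi$ is anisotropic.

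Anisotropy then feeds directly into the hypothesis: an anisotropic form of degree $p$ has dimension at most $d$, so $np+2\le d$, that is $n\le (d-2)/p$. As $n$ is an integer this reads $n\le \lfloor (d-2)/p\rfloor$, and the elementary identity $\lfloor (d-2)/p\rfloor=\lceil (d-1)/p\rceil-1$ delivers the asserted bound $n\le \lceil (d-1)/p\rceil-1$. Here the hypothesis $d>1$ is what keeps the right-hand side nonnegative, so that the statement also covers split algebras, which have symbol length $0$.

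I do not expect a serious conceptual obstacle, since Proposition~\ref{change_left}(c) already does the heavy lifting; the point most in need of care is making the minimality-to-anisotropy implication watertight, together with checking $\dim_F V=np+2$ and applying the floor/ceiling identity correctly under $d>1$. One could equally run the descent through Theorem~\ref{linkage}, splitting the minimal representation into a block of length $n-1$ and a single remaining symbol, using linkage (available once $np\ge d-1$) to give them a common left slot, and then fusing the two matching symbols via Remark~\ref{rem1}(b) into one symbol and a split factor; this recovers the same reduction for $n\ge 2$, while the boundary cases are settled by Proposition~\ref{change_left}(c) as above.
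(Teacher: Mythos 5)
Your proposal is correct, but your primary argument takes a genuinely different route from the paper. The paper never considers a minimal presentation: it argues by induction on the number of symbols, showing that any presentation $A=\bigotimes_{i=1}^k[\alpha_i,\beta_i)_{p,F}$ with $k\geq \left\lceil \frac{d-1}{p}\right\rceil$ (so $kp\geq d-1$) can be shortened by applying Theorem~\ref{linkage} to the pair consisting of the single symbol $[\alpha_1,\beta_1)_{p,F}$ and the product $\bigotimes_{i=2}^k[\alpha_i,\beta_i)_{p,F}$, which yields a common left slot $\alpha_1=\alpha_2$, after which Remark~\ref{rem1}(b) fuses the two matching symbols into one symbol plus a split factor. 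Your closing sketch is essentially this argument, so you have rediscovered the paper's proof as your fallback. Your main route, by contrast, bypasses Theorem~\ref{linkage} entirely: minimality of the presentation forces the form $\varphi$ of Proposition~\ref{change_left} to be anisotropic (an isotropic vector would, via part (c), replace one symbol by $M_p(F)$ and contradict minimality), whence $np+2\leq d$, and the identity $\left\lfloor\frac{d-2}{p}\right\rfloor=\left\lceil\frac{d-1}{p}\right\rceil-1$, valid for integers, gives exactly the stated bound. I checked the details: $\dim_F V=np+2$ is right since each $F[x_i]$ has dimension $p$ by Remark~\ref{rem1}(e); Proposition~\ref{change_left} is stated for any positive $k$, so the case $n=1$ is covered; the hypothesis $d>1$ indeed makes the bound nonnegative so the split case $n=0$ is harmless; and passing from the isomorphism $\bigotimes_{i=1}^n[\alpha_i,\beta_i)_{p,F}=M_p(F)\otimes C_2\otimes\dots\otimes C_n$ to a Brauer-equivalent product of $n-1$ symbols is legitimate since symbol length is defined up to Brauer equivalence. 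Comparing the two: your minimality argument is shorter and non-inductive, deriving the corollary in one step from Proposition~\ref{change_left}(c), and it yields the mildly stronger observation that every minimal-length presentation carries an anisotropic associated form; the paper's inductive route costs more but reuses Theorem~\ref{linkage}, which is an independent result of the paper (the common-slot statement), and gives an explicit step-by-step reduction procedure rather than a proof by contradiction.
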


\begin{proof}
Write $n=\left \lceil \frac{d-1}{p} \right \rceil -1$.
Then it suffices to prove that for every tensor product $A=\bigotimes_{i=1}^k [\alpha_i,\beta_i)_{p,F}$ with $k \geq n+1$, 
$A=M_p(F) \otimes C_1 \otimes \dots \otimes C_{k-1}$ for some symbol algebras $C_1,\dots,C_{k-1}$ of degree $p$ and the statement then follows by induction.

Note that if $k \geq n+1=\left \lceil \frac{d-1}{p} \right \rceil$ then $kp \geq d-1$.
Consider the algebra $[\alpha_1,\beta_1)_{p,F}$ and the tensor product $\bigotimes_{i=2}^k [\alpha_i,\beta_i)_{p,F}$.
By Theorem \ref{linkage}, since $p(1+(k-1))=kp \geq d-1$ we can assume $\alpha_1=\alpha_2$, and so 
$$[\alpha_1,\beta_1)_{p,F} \otimes [\alpha_2,\beta_2)_{p,F} \otimes \dots \otimes [\alpha_k,\beta_k)_{p,F}=M_p(F) \otimes [\alpha_1,\beta_1 \beta_2)_{p,F} \otimes [\alpha_3,\beta_3)_{p,F} \otimes \dots \otimes [\alpha_k,\beta_k)_{p,F}.$$
\end{proof}

\begin{rem}
By using similar methods, the upper bound for the symbol length of algebras of exponent $p$ over a field $F$ with $\operatorname{char}(F) \neq p$ containing primitive $p$th roots of unity appearing in \cite[Theorem 3.4]{Matzri} can be sharpened from $\left \lceil \frac{d+1}{p} \right \rceil -1$ to $\left \lceil \frac{d-1}{p} \right \rceil -1$ as well, where the maximal dimension of an anisotropic homogeneous polynomial form of degree $p$ over $F$ is $d$ greater than 1.
In the case of $p=2$ and $\operatorname{char}(F) \neq 2$, $d$ is the $u$-invariant of $F$, $u(F)$, and so this upper bound is $\left \lceil \frac{u(F)-1}{2} \right \rceil$ which appears in \cite[Theorem 2 (a)]{Kahn:1990}. When we assume further that $I^3 F=0$, this bound is sharp (see \cite[Theorem 2 (b)]{Kahn:1990}). For the construction of fields $F$ with $\operatorname{char}(F) \neq 2$, $I^3 F=0$ and any even $u(F)$ see \cite[Theorem 4]{Merkurjev1991}.
\end{rem}

\section{Quaternion Algebras}

In this section we focus on the case of $p=2$ and $\operatorname{char}(F)=2$.
In this case, the symbol algebras are quaternion algebras.
The corresponding homogeneous polynomial forms of degree $p$ constructed in the previous section are now quadratic forms.
Quadratic forms in this case are of the shape
$$[a_1,b_1] \perp \dots \perp [a_r,b_r] \perp \langle c_1,\dots,c_t \rangle$$
for some $a_1,b_1,\dots,a_r,b_r,c_1,\dots,c_t \in F$.
Each $[a_i,b_i]$ stands for the quadratic form $a_i u_i^2+u_i v_i+b_i v_i^2$, $\langle c_1,\dots,c_t \rangle$ is the diagonal form $c_1 w_1^2+\dots+c_t w_t^2$, and $\perp$ is the orthogonal sum.
A quadratic form is nonsingular if $t=0$.
In particular, the dimension of a nonsingular quadratic form is always even.
Note that $c [a,b]$ is isometric to $[\frac{a}{c},bc]$ for $c \in F^\times$, so orthogonal sums of quadratic forms of the shape $c [a,b]$ are also nonsingular.

Given $K=F[x : x^2+x=\alpha]$, the norm form $\norm_{K/F} : K \rightarrow F$ mentioned in Remark \ref{rem1} $(e)$ is the quadratic form $[\alpha,1]$.

The maximal dimension of an anisotropic quadratic form over $F$ is denoted by $\hat{u}(F)$. The number $d$ appearing in Theorem \ref{linkage} and Corollary \ref{corcharp} is equal to $\hat{u}(F)$ when $p=2$.
The $u$-invariant of $F$, denoted by $u(F)$, is the maximal dimension of an anisotropic nonsingular quadratic form over $F$.
Clearly $u(F) \leq \hat{u}(F)$, and there are many examples in the literature where this inequality is strict (see for example \cite{MammoneTignolWadsworth:1991}).
Therefore, using $u(F)$ instead of $\hat{u}(F)$ gives a better upper bound for the symbol length in the case of $p=2$.
We now rephrase Theorem \ref{linkage} and Corollary \ref{corcharp} in terms of $u(F)$.

\begin{thm}
Let $F$ be a field with $\operatorname{char}(F) = 2$ and $u(F) < \infty$.
Then every two tensor products of quaternion algebras $A=\bigotimes_{i=1}^k [\alpha_i,\beta_i)_{2,F}$ and $B=\bigotimes_{i=1}^\ell [\gamma_i,\delta_i)_{2,F}$ with $(k+\ell) 2 \geq u(F)$ can be changed such that $\alpha_1=\gamma_1$.
\end{thm}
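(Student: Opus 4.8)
The plan is to run the proof of Theorem \ref{linkage} essentially verbatim, the only new ingredient being that for $p=2$ all of the forms involved are \emph{nonsingular}, so that the controlling invariant is $u(F)$ rather than $\hat{u}(F)$. First I would unwind the shape of the form $\varphi$ attached to $A$ by Proposition \ref{change_left}: for $p=2$ and $\operatorname{char}(F)=2$ (so $-uv=uv$) it reads
$$\varphi(u,v,f_1,\dots,f_k)=(\alpha_1+\dots+\alpha_k)u^2+uv+v^2+\sum_{i=1}^k \operatorname{N}_{F[x_i]/F}(f_i)\,\beta_i.$$
The point is that every summand is a nonsingular binary form: the part in $u,v$ is exactly $[\alpha_1+\dots+\alpha_k,1]$, and each term $\operatorname{N}_{F[x_i]/F}(f_i)\,\beta_i$ is $\beta_i$ times the norm form $[\alpha_i,1]$, which by the isometry $c[a,b]\cong[\tfrac{a}{c},bc]$ recorded above is again of type $[\,\cdot\,,\,\cdot\,]$, hence nonsingular. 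Thus $\varphi$ is nonsingular of dimension $2k+2$, and likewise the form $\psi$ attached to $B$ is nonsingular of dimension $2\ell+2$.

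Next I would form the same combined form as in Theorem \ref{linkage}, namely $\varphi(u,v,f_1,\dots,f_k)-\psi(u,0,f_1',\dots,f_\ell')$, in which the variable $u$ is shared and the $v$-variable of $\psi$ is set to $0$. Since subtraction is addition in characteristic $2$ and an orthogonal sum of nonsingular forms is nonsingular, this combined form is nonsingular, and a direct count of variables gives dimension $2(k+\ell)+2$. Because $u(F)$ is even and the hypothesis gives $2(k+\ell)\geq u(F)$, the dimension satisfies $2(k+\ell)+2\geq u(F)+2>u(F)$, so the combined nonsingular form must be isotropic. This is precisely the step where $u(F)$ replaces $\hat{u}(F)$: the original proof needed the dimension to exceed $\hat{u}(F)$, whereas here nonsingularity lets us compare against the smaller invariant $u(F)$.

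Once a nontrivial zero of the combined form is in hand, the argument is identical to that of Theorem \ref{linkage}. If the zero has $u\neq 0$, then homogeneity gives $\varphi(1,\tfrac{v}{u},\tfrac{f_1}{u},\dots)=\psi(1,0,\tfrac{f_1'}{u},\dots)$, and Proposition \ref{change_left}(a) lets me rewrite both $A$ and $B$ so that the first quaternion algebra of each carries this common value in its left slot, yielding $\alpha_1=\gamma_1$. If $u=0$, the same case analysis as before shows that neither all the $f_i$ nor all the $f_i'$ can vanish (else anisotropy of one form would be violated), so Proposition \ref{change_left}(b) applies to both tensor products to place $\varphi(0,v,f_1,\dots,f_k)$ and $\psi(0,0,f_1',\dots,f_\ell')$ in the right slots of the respective first algebras; these values coincide by the zero relation, and \cite[Theorem 3.2]{Chapman:2015} converts a common right slot into the desired common left slot.

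I expect the only genuine obstacle to be the bookkeeping that legitimizes replacing $\hat{u}(F)$ by $u(F)$: one must verify that scaling each norm form by $\beta_i$ and taking orthogonal sums never introduces a singular (diagonal) part, so that the combined form truly lies in the class governed by $u(F)$. Once nonsingularity is confirmed, the parity of $u(F)$ upgrades the clean inequality $2(k+\ell)\geq u(F)$ to strict isotropy, and nothing else in the proof of Theorem \ref{linkage} requires modification.
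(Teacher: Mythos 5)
Your proposal is correct and takes essentially the same approach as the paper: the paper's proof likewise identifies the combined form as the nonsingular quadratic form $[\alpha_1+\dots+\alpha_k+\gamma_1+\dots+\gamma_\ell,1] \perp \beta_1[\alpha_1,1] \perp \dots \perp \beta_k[\alpha_k,1] \perp \delta_1[\gamma_1,1] \perp \dots \perp \delta_\ell[\gamma_\ell,1]$ of dimension $2(k+\ell)+2 > u(F)$, deduces isotropy, and continues exactly as in Theorem \ref{linkage}. Your invocation of the parity of $u(F)$ is unnecessary---$2(k+\ell)\geq u(F)$ already yields $2(k+\ell)+2>u(F)$---but harmless.
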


\begin{proof}
The proof is essentially the same as in Theorem \ref{linkage}.
One just has to note that the form 
$\varphi(u,v,f_1,\dots,f_k)-\psi(u,0,f_1',\dots,f_\ell')$ in this case is the quadratic form
$$[\alpha_1+\dots+\alpha_k+\gamma_1+\dots+\gamma_\ell,1] \perp \beta_1 [\alpha_1,1] \perp \dots \perp \beta_k [\alpha_k,1] \perp \delta_1 [\gamma_1,1] \perp \dots \perp \delta_\ell [\gamma_\ell,1].$$
This quadratic form is nonsingular of dimension $(k+\ell) 2+2$, which is greater than $u(F)$. Hence it has a nontrivial zero, and the proof continues in the same manner as the other proof.
\end{proof}

\begin{cor}\label{corchar2}
Let $F$ be a field with $\operatorname{char}(F) = 2$ and $2 \leq u(F) < \infty$.
Then the symbol length of algebras of exponent $2$ over $F$ is bounded from above by $\frac{u(F)}{2} -1$.
\end{cor}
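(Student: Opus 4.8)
The plan is to follow the proof of \Cref{corcharp} almost verbatim, substituting $u(F)$ for $d$ and invoking the preceding theorem (the $u$-invariant version of \Tref{linkage} proved immediately above) in place of \Tref{linkage} itself. Writing $n=\frac{u(F)}{2}-1$, I would reduce the statement to a single absorption step: it suffices to show that any tensor product $A=\bigotimes_{i=1}^k [\alpha_i,\beta_i)_{2,F}$ with $k\geq n+1$ satisfies $A=M_2(F)\otimes C_1\otimes\dots\otimes C_{k-1}$ for some quaternion symbol algebras $C_1,\dots,C_{k-1}$, after which induction on $k$ delivers the bound $n$ on the symbol length.

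For the absorption step, I would separate off the first factor and regard $A$ as $[\alpha_1,\beta_1)_{2,F}$ together with $\bigotimes_{i=2}^k [\alpha_i,\beta_i)_{2,F}$. Since $k\geq n+1=\frac{u(F)}{2}$ forces $2k\geq u(F)$, the hypothesis $(1+(k-1))\cdot 2=2k\geq u(F)$ of the preceding theorem is met, so the two pieces can be modified to share a common left slot, i.e. $\alpha_1=\alpha_2$. Then \Rref{rem1} $(b)$ collapses the first two factors via $[\alpha_1,\beta_1)_{2,F}\otimes[\alpha_1,\beta_2)_{2,F}=M_2(F)\otimes[\alpha_1,\beta_1\beta_2)_{2,F}$, producing the desired split factor $M_2(F)$ and cutting the number of nontrivial symbols by one.

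The one genuinely arithmetic point to verify is that this bound refines the general one of \Cref{corcharp}. Because every nonsingular quadratic form in characteristic $2$ has even dimension, $u(F)$ is even, whence $\left\lceil\frac{u(F)-1}{2}\right\rceil=\frac{u(F)}{2}$ and therefore $\left\lceil\frac{u(F)-1}{2}\right\rceil-1=\frac{u(F)}{2}-1=n$. I do not expect a substantive obstacle: the real content has already been discharged in the preceding theorem, and the corollary is a formal induction. The only thing to watch is that one consistently uses the nonsingular ($u$-invariant) version of the linkage theorem rather than the $\hat{u}(F)$ version, since the linking form assembled there is nonsingular and so its isotropy is governed by $u(F)$; invoking the general \Tref{linkage} instead would only yield the weaker bound $\frac{\hat{u}(F)}{2}-1$.
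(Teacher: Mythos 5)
Your proposal is correct and is exactly the argument the paper intends: the paper states Corollary \ref{corchar2} without proof precisely because it follows from the preceding $u$-invariant linkage theorem by the same induction as in Corollary \ref{corcharp}, including the absorption step via Remark \ref{rem1} $(b)$ and the arithmetic $k\geq \frac{u(F)}{2}$ giving $2k\geq u(F)$. Your closing observation --- that the nonsingular linking form is what lets $u(F)$ replace $\hat{u}(F)$, and that evenness of $u(F)$ makes the ceiling disappear --- matches the paper's discussion preceding the theorem.
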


In certain cases this upper bound is sharp as Proposition \ref{remchar2} suggests. Before stating this proposition, we recall a few facts about quadratic forms: The unique (up to isometry) isotropic nonsingular quadratic form of dimension 2 over $F$ is the hyperbolic plane $\varmathbb{H}=[0,1]$. Every nonsingular quadratic form $\varphi$ over $F$ can be written uniquely as $\varphi_{\operatorname{an}} \perp \underbrace{\varmathbb{H} \perp \dots \perp \varmathbb{H}}_{m \enspace \operatorname{times}}$ for some anisotropic form $\varphi_{\operatorname{an}}$ and some nonnegative integer $m$ called the Witt index of $\varphi$. Two forms $\varphi$ and $\phi$ are Witt equivalent if their underlying anisotropic subforms $\varphi_{\operatorname{an}}$ and $\phi_{\operatorname{an}}$ are isometric. The group of Witt equivalence classes of nonsingular quadratic forms over $F$ with $\perp$ as the group operation is denoted by $I_q F$.
The ``Arf invariant" or ``discriminant" of a nonsingular form $\varphi=[a_1,b_1] \perp \dots \perp [a_r,b_r]$ is the class of $\sum_{i=1}^r a_i b_i$ in the additive group $F/\wp(F)$ where $\wp(F)=\{\lambda^2+\lambda : \lambda \in F\}$. The subgroup of $I_q F$ of forms with trivial discriminant is $I_q^2 F$. 
The Clifford invariant maps $2r$-dimensional nonsingular forms $\varphi=[a_1,b_1] \perp \dots \perp [a_r,b_r]$ with trivial discriminant to tensor products of $r-1$ quaternion algebras $$E(\varphi)=[a_1 b_1,b_r b_1)_{2,F} \otimes \dots \otimes [a_{r-1} b_{r-1},b_r b_{r-1})_{2,F}.$$ For every tensor product $T$ of $r-1$ quaternion algebras there exists a nonsingular form $\varphi$ of dimension $2r$ with trivial discriminant such that $E(\varphi)=T$ (see \cite[Proof of Theorem 4.1]{Chapman:2015:chain}).
The Clifford invariant defines a group epimorphism from $I_q^2 F$ to $\prescript{}{2} Br(F)$ whose kernel is exactly $I_q^3 F$.

\begin{lem}\label{addedlem}
If $\operatorname{char}(F)=2$, $I_q^3 F=0$ and $4 \leq u(F) < \infty$ then there exists an anisotropic nonsingular quadratic form with trivial discriminant of dimension $u(F)$.
\end{lem}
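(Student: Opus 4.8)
The plan is to start from an anisotropic nonsingular form $\varphi$ of dimension $u=u(F)$, which exists by definition of $u(F)$, and to correct its discriminant without sacrificing either anisotropy or the dimension. If $\disc(\varphi)=0$ we are done, so assume $\disc(\varphi)=\delta\neq 0$ in $F/\wp(F)$; then the norm form $[\delta,1]$, and more generally each scalar multiple $[a,\delta a^{-1}]\cong a[\delta,1]$ with $a\in F^\times$, is an anisotropic binary form of discriminant $\delta$. The key observation is that if $\tau$ is \emph{any} binary form of discriminant $\delta$ that is not a subform of $\varphi$, then $\varphi\perp\tau$ is nonsingular of dimension $u+2$ with trivial discriminant, hence isotropic; its Witt index is at least $1$, and it cannot be $\geq 2$, because Witt index $2$ would force $\tau$ (which equals $-\tau$ in characteristic $2$) to be a subform of $\varphi$. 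So the Witt index is exactly $1$, and $(\varphi\perp\tau)_{\operatorname{an}}$ is an anisotropic nonsingular form of dimension $u$ with trivial discriminant, exactly as required. Thus the entire problem reduces to exhibiting one binary form of discriminant $\delta$ that does not embed into $\varphi$.

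First I would dispose of the case where $\varphi$ is not universal: if some $a\in F^\times$ is not represented by $\varphi$, then $\tau=[a,\delta a^{-1}]$ represents $a$ and so cannot be a subform of $\varphi$, and the reduction above finishes the argument. I also record a fact the hypotheses already supply, namely ${}_2\Br(F)\neq 0$: otherwise $I_q^2F=0$, every discriminant-$0$ form would be hyperbolic, and for an anisotropic $4$-dimensional form (which exists since $u\geq 4$) the construction $\varphi\perp[\delta,1]$ would be hyperbolic of dimension $6$, forcing $\varphi\cong[\delta,1]\perp\varmathbb{H}\perp\varmathbb{H}$ and contradicting anisotropy; hence anisotropic $2$-fold Pfister forms are available. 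Since $[\delta,1]$ is a round form, the isometry classes of binary forms of discriminant $\delta$ are precisely its scalar multiples, parametrised by $F^\times/\operatorname{N}_{K/F}(K^\times)$ with $K=F[x:x^2+x=\delta]$; the obstruction to the construction is therefore exactly that \emph{every} such class embeds into $\varphi$, which in particular makes $\varphi$ universal.

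The main obstacle is this universal, all-embedding case: here $\varphi\cong a[\delta,1]\perp\eta_a$ with every $\eta_a$ anisotropic of dimension $u-2$ and trivial discriminant, which is precisely the configuration the naive construction cannot resolve, and it is where the hypothesis $I_q^3F=0$ must enter decisively. My plan is to exploit the Clifford invariant isomorphism $E\colon I_q^2F\xrightarrow{\ \sim\ }{}_2\Br(F)$, whose injectivity is equivalent to $I_q^3F=0$: the relation $\varphi\perp a[\delta,1]\cong\varmathbb{H}\perp\varmathbb{H}\perp\eta_a$ gives $E(\eta_a)=E(\eta_1)\cdot[\delta,a)_{2,F}$, so the maximal-dimensional anisotropic class $E(\eta_1)$ would be linked to every quaternion algebra split by $K$. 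I would then feed an anisotropic $2$-fold Pfister form $\pi$ (available by the previous paragraph) into $\eta_1$ and track dimensions through the Witt decomposition of $\eta_1\perp\pi$, using injectivity of $E$ to pin down its anisotropic part, so as to produce either a maximal form that fails to be universal (reducing to the earlier case) or directly an anisotropic discriminant-$0$ form of dimension $u$. Controlling this Witt index — showing it cannot collapse the dimension below $u$ — is the crux of the whole argument, and the step I expect to require the full force of $I_q^3F=0$ together with $u\geq 4$.
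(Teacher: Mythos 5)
Your reduction is sound and in fact coincides with the paper's opening moves: adjoin a binary form of discriminant $\delta$ to $\varphi$, note that the resulting $(u+2)$-dimensional nonsingular form is isotropic with Witt index at most $2$ (since $\varphi$ is anisotropic), observe that Witt index $1$ immediately yields the desired form, and that Witt index $2$ forces the binary form to be a subform of $\varphi$. But the proof stops exactly where it must not: the ``all-embedding'' case, where every $a[\delta,1]$ embeds in $\varphi$, is handled only by an avowedly speculative plan (feed a $2$-fold Pfister form into $\eta_1$, ``track dimensions'' through the Witt decomposition of $\eta_1\perp\pi$, and hope the Witt index ``cannot collapse''), and you yourself flag this as the unresolved crux. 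As written this is a genuine gap: nothing in the sketch shows that the decomposition of $\eta_1\perp\pi$ produces either of the two outputs you want, and injectivity of the Clifford invariant by itself gives no control on Witt indices or dimensions.

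The missing ingredient is a single known fact, which the paper invokes from \cite[Proof of Theorem 4.1]{BarryChapman:2015}: when $I_q^3F=0$, every anisotropic nonsingular quadratic form with trivial discriminant is universal. This kills the Witt-index-$2$ case outright, with the single choice $\tau=[\delta,1]$ and no case distinction on universality of $\varphi$ at all. Indeed, if $\varphi\perp[\delta,1]$ had Witt index $2$, then (using $\varmathbb{H}\perp\varmathbb{H}\cong[\delta,1]\perp[\delta,1]$ and Witt cancellation, as you observed) $\varphi\cong\eta\perp[\delta,1]$ with $\eta$ anisotropic of dimension $u-2$ and trivial discriminant; if $u=4$ this is already absurd, since a binary nonsingular form with trivial discriminant is hyperbolic, and if $u\geq 6$ the universality fact applies to $\eta$, so $\eta$ represents some nonzero value that $[\delta,1]$ also represents, and since $\operatorname{char}(F)=2$ these two values sum to zero, making $\varphi$ isotropic --- a contradiction. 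So the case you found intractable in fact cannot occur, and your ``non-universal'' branch is subsumed; the lemma on universality of anisotropic forms in $I_q^2F$ is precisely where the hypothesis $I_q^3F=0$ enters, and proving or citing it is what your argument needs instead of the Clifford-invariant computation you propose, which as sketched does not close.
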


\begin{proof}
Let $\phi$ be an anisotropic nonsingular form of dimension $u(F)$. Denote its discriminant by $\delta$.
If $\delta \in \wp(F)$, we are done, so assume $\delta \not \in \wp(F)$.
Write $\psi=\phi \perp [\delta,1]$.
Since $\psi$ is of dimension $u(F)+2$, it is isotropic.
The Witt index of $\psi$ can be either $1$ or $2$.
If it is $2$ then $\psi=\psi_{\operatorname{an}} \perp \varmathbb{H} \perp \varmathbb{H}=\psi_{\operatorname{an}} \perp [\delta,1] \perp [\delta,1]$, and so $\phi=\psi_{\operatorname{an}} \perp [\delta,1]$.
Note that the discriminant of $\psi_{\operatorname{an}}$ is trivial. Since $I_q^3 F=0$, $\psi_{\operatorname{an}}$ is universal (see \cite[Proof of Theorem 4.1]{BarryChapman:2015}), which contradicts the assumption that $\phi$ is anisotropic.
Consequently the Witt index of $\psi$ must be 1.
Therefore $\psi_{\operatorname{an}}$ is an anisotropic nonsingular form of dimension $u(F)$ with trivial discriminant.
\end{proof}

\begin{rem}
By \cite[Theorem 2]{Kahn:1990} $(b)$, $u(F)$ is even when $\operatorname{char}(F) \neq 2$, $I^3 F=0$ and $u(F) > 1$. One can therefore prove in a similar way to Lemma \ref{addedlem} that if $\operatorname{char}(F) \neq 2$, $I^3 F=0$ and $4 \leq u(F) < \infty$ then there exists an anisotropic quadratic form with trivial discriminant of dimension $u(F)$.
\end{rem}

\begin{prop}\label{remchar2}
When $\operatorname{char}(F)=2$, $I_q^3 F=0$ and $2 \leq u(F) < \infty$, the bound $\frac{u(F)}{2}-1$ for the symbol length of algebras of exponent 2 over $F$ is sharp.
\end{prop}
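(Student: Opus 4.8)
The plan is to exhibit a single algebra of exponent $2$ whose symbol length equals the upper bound $\frac{u(F)}{2}-1$ supplied by Corollary \ref{corchar2}; together with that corollary this shows the supremum of symbol lengths over $\prescript{}{2}Br(F)$ is exactly $\frac{u(F)}{2}-1$. First I would dispose of the degenerate case $u(F)=2$, where the bound is $0$: the norm form of any quaternion algebra is a nonsingular form of dimension $4>u(F)$, hence isotropic, so every quaternion algebra splits; since every class in $\prescript{}{2}Br(F)$ is a tensor product of quaternion algebras, $\prescript{}{2}Br(F)=0$ and the symbol length is indeed $0$. I may therefore assume $u(F)\geq 4$, and since nonsingular forms have even dimension I write $u(F)=2r$ with $r\geq 2$.

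Next I would construct the witness algebra. By Lemma \ref{addedlem} there is an anisotropic nonsingular form $\varphi=[a_1,b_1]\perp\dots\perp[a_r,b_r]$ with trivial discriminant of dimension $u(F)=2r$. Its Clifford invariant $A=E(\varphi)=[a_1b_1,b_rb_1)_{2,F}\otimes\dots\otimes[a_{r-1}b_{r-1},b_rb_{r-1})_{2,F}$ is a tensor product of $r-1$ quaternion algebras, so $A\in\prescript{}{2}Br(F)$ and its symbol length is at most $r-1=\frac{u(F)}{2}-1$. It then remains to establish the matching lower bound, namely that $A$ cannot be written with fewer quaternion factors.

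The heart of the argument is this lower bound. Suppose for contradiction that $A$ is Brauer equivalent to a tensor product of $s$ quaternion algebras with $s<r-1$. Using the fact recalled just before the proposition — that every tensor product of $s$ quaternion algebras arises as the Clifford invariant of some nonsingular form of dimension $2(s+1)$ with trivial discriminant — I obtain a form $\psi$ of dimension $2(s+1)$ with trivial discriminant satisfying $E(\psi)=A=E(\varphi)$. Both $\varphi$ and $\psi$ represent classes in $I_q^2 F$, and because $I_q^3 F=0$ the Clifford invariant is injective on $I_q^2 F$; hence $\varphi$ and $\psi$ are Witt equivalent and $\varphi_{\operatorname{an}}\cong\psi_{\operatorname{an}}$. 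As $\varphi$ is already anisotropic, $\varphi_{\operatorname{an}}=\varphi$, and since $\psi_{\operatorname{an}}$ is $\psi$ with its hyperbolic planes removed we get $2r=\dim\varphi=\dim\psi_{\operatorname{an}}\leq\dim\psi=2(s+1)$, i.e. $s\geq r-1$, contradicting $s<r-1$. Thus the symbol length of $A$ equals $r-1=\frac{u(F)}{2}-1$, and with Corollary \ref{corchar2} this proves sharpness.

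The step I expect to be the crux is the injectivity of the Clifford invariant combined with the dimension bookkeeping: the entire contradiction rests on translating "few quaternion factors" into "a low-dimensional representative of the same class in $I_q^2 F$," and then exploiting $I_q^3 F=0$ to force the anisotropic parts to be isometric, which becomes impossible the moment the candidate dimension $2(s+1)$ drops below $u(F)=2r$. Verifying that $\psi$ genuinely has dimension exactly $2(s+1)$, and that $\varphi,\psi$ lie in $I_q^2 F$ so that the injectivity applies, are the points requiring care.
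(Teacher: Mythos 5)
Your proposal is correct and takes essentially the same approach as the paper: both exhibit $E(\varphi)$ for an anisotropic nonsingular form $\varphi$ of dimension $u(F)$ with trivial discriminant supplied by Lemma \ref{addedlem}, and both derive the lower bound from the fact that $I_q^3F=0$ makes the Clifford invariant injective on $I_q^2F$, so a presentation with fewer factors would force a Witt-equivalent representative of dimension below $\dim \varphi$. The only difference is cosmetic: you spell out the case $u(F)=2$ via isotropy of quaternion norm forms, where the paper simply calls that case clear.
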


\begin{proof}
Write $u(F)=2 n$ for some positive integer $n$. The statement is clearly true when $n=1$, so assume $n \geq 2$. It is enough to find an algebra of exponent 2 over $F$ whose symbol length is $\frac{u(F)}{2}-1=n-1$. 
By Lemma \ref{addedlem} there exists an anisotropic nonsingular form $\varphi$ with trivial discriminant of dimension $2n$.
We claim that the symbol length of $E(\varphi)$ is $n-1$.
Clearly it is no greater than $n-1$.
Suppose $E(\varphi)$ is Brauer equivalent to a tensor product $T$ of $k$ quaternion algebras where $k < n-1$.
Then there exists some nonsingular form $\phi$ of dimension $2(k+1)$ with trivial discriminant such that $E(\phi)=T$.
Since $E(\phi)$ and $E(\varphi)$ are Brauer equivalent, the forms $\phi$ and $\varphi$ are equivalent modulo $I_q^3 F$. However, $I_q^3 F=0$, which means that $\phi$ and $\varphi$ are Witt equivalent, but this is impossible because $\varphi$ is anisotropic of dimension $2n$ and $\phi$ is of dimension $2(k+1) < 2n$.
\end{proof}

Corollary \ref{corchar2} and Proposition \ref{remchar2} provide characteristic 2 analogues to parts $(a)$ and $(b)$ of \cite[Theorem 2]{Kahn:1990}.
For the construction of fields $F$ with $\operatorname{char}(F)=2$, $I_q^3 F=0$ and any even $u$-invariant see \cite[Theorem 38.4]{EKM}.

\begin{rem}
There is a typographical error in the first sentence, second paragraph of the proof of \cite[Theorem 4.1]{BarryChapman:2015}. The correct sentence should be: ``Now we show that if $f \in I_q^2 F$ is an anisotropic form of dimension at least $6$ and $f_K$ is isotropic for some $K=F[x: x^2+x=a]$ then $f_K=\varmathbb{H} \perp f_0$ where $\varmathbb{H}$ is a hyperbolic plane and $f_0$ is anisotropic".
\end{rem}

\section*{Acknowledgments}

The author thanks the anonymous referee for the helpful comments on the manuscript.

\section*{Bibliography}
\bibliographystyle{amsalpha}
\bibliography{bibfile}
\end{document}